
\documentclass[11pt]{amsart}

\usepackage{microtype}
\usepackage{bbm}
\usepackage{mathtools} 

\swapnumbers

\usepackage{amssymb,amsfonts}
\usepackage[mathscr]{eucal}
\usepackage[alphabetic]{amsrefs}

\usepackage[ps,matrix,arrow,curve,cmtip]{xy}


\title{Frobenius pairs and Atiyah Duality}
\author{Charles Rezk}
\date{ \today}
\address{Department of Mathematics \\
University of Illinois at Urbana-Champaign \\ 
Urbana, IL}
\email{rezk@math.uiuc.edu}
\thanks{The author was supported under NSF grant DMS--1006054.}



\numberwithin{equation}{section}

\makeatletter
  \let\c@subsection\c@equation
\makeatother

\theoremstyle{plain}   


\newtheorem{prop}[subsection]{Proposition}
\newtheorem{cor}[subsection]{Corollary}

\theoremstyle{remark}
\newtheorem{rem}[subsection]{Remark}

\theoremstyle{plain}


\DeclareMathOperator{\id}{id}

\newcommand{\op}{{\operatorname{op}}}

\newcommand{\Hom}{{\operatorname{Hom}}}

\newcommand{\ra}{\rightarrow}

\newcommand{\xra}{\xrightarrow}






\newcommand{\R}{\mathbb{R}}


\newcommand{\sm}{\wedge} 

\newcommand{\dfn}{\textbf}


\def\defeq{\overset{\mathrm{def}}=}


\setlength{\textwidth}{6.05in}
\setlength{\oddsidemargin}{.225in}
\setlength{\evensidemargin}{.225in}

\setcounter{tocdepth}{1}

\raggedbottom

\tolerance=3000
\hbadness=4000
\hfuzz=1pt

\begin{document}

\newcommand{\one}{\mathbbm{1}}
\newcommand{\cc}{\mathfrak{C}}
\newcommand{\uHom}{\underline{\mathrm{Hom}}}

\begin{abstract}
We define a notion of ``Frobenius pair'', which is a mild generalization of
the notion of ``Frobenius object'' in a monoidal category.  We then
show that Atiyah duality for smooth manifolds can be encapsulated in
the statement that a 
certain collection of structure obtained from a manifold forms a
``commutative Frobenius pair'' in the stable homotopy category of spectra. 
\end{abstract}

\maketitle


\section{Introduction}

A \emph{Frobenius algebra} over a field $k$ is an associative
$k$-algebra equipped 
with a $k$-linear map $\lambda\colon A\ra k$ such that the pairing
$A\otimes_k A\ra k$ defined by $x\otimes y\mapsto \lambda(xy)$ is
non-degenerate.  One class of Frobenius algebras is produced by
\emph{Poincar\'e duality}: if $M$ is a closed compact manifold which is
orientable, the cohomology ring $H^*(M;k)$ admits the structure of a
Frobenius algebra.  There is a generalization of this notion to an
arbitrary monoidal category, which is called a \emph{Frobenius object}
\cite{kock-frobenius-algebras}.

If one wants to refine classical Poincar\'e duality to an arbitrary
generalized cohomology theory, one is lead to \emph{Atiyah duality}
\cite{atiyah-thom-complexes}.  
This states that for any closed compact manifold (not necessarily
orientable), there exists a non-degenerate pairing
\[
M^{-\tau}\sm M_+ \ra S^0
\]
in the stable homotopy category; here $M_+$ denotes the suspension
spectrum of $M$ with a disjoint basepoint, $M^{-\tau}$ denotes the Thom
spectrum associated to the stable normal bundle of $M$, and
``non-degenerate'' means that the adjoint map $M_+\ra
\uHom(M^{-\tau},S^0)$ is a weak equivalence, i.e., that $M_+$ and
$M^{-\tau}$ are Spanier-Whitehead dual.

We cannot in general say that $M_+$ is a Frobenius object in the
stable homotopy category; at best this can be done only if $M_+$
admits a stable framing.  By smashing with an appropriate
multiplicative cohomology theory, one can obtain a Frobenius object in
some category of module spectra; see
\cite{strickland-kn-local-duality-finite-groups} for a 
treatment along these lines. 

The goal of this note is to describe a mild generalization of the
notion of a Frobenius object in a monoidal category, which we call a
\emph{Frobenius pair}, and to show that Atiyah duality is naturally
encapsulated by this definition.  Notably, we use this formulation to
give a proof of Atiyah duality for smooth manifolds which has a rather
formal character, in the 
sense that we use only the existence of
geometric constructions such as the Pontryagin-Thom collapse map, the
equivalence of embeddings into Euclidean space of large dimension, and
standard formalities about Thom spectra.

I am sure that the notion which I have called  ``Frobenius pair'' has been
encountered elsewhere, 
though I've been unable to trace it in the literature.  I am also sure
that the proof of Atiyah duality sketched here is well known; some
constructions like the ones described here appear in
\cite{cohen-multiplicative-thom}.  The  proof given here actually grew out of
an attempt to generalize the proof of Poincar\'e duality in ordinary cohomology
described in Chapter 10 of
\cite{milnor-stasheff-characteristic-classes} to one valid for any
cohomology theory.
The notion of Frobenius pair and the proof of Atiyah
duality taken together make for a nice story, hence this note.

\section{Frobenius pairs}

The following definition takes place in a monoidal category $\cc$,
with unit object $\one$ and associative monoidal product
$\otimes\colon \cc\times\cc\ra \cc$.  The name ``Frobenius pair'' is meant
to be suggestive of ``Frobenius object'', except that instead of a
single object $A$ with structure, it consists of two objects $X$ and
$Y$, with some additional structure.

\subsection*{Definition of left Frobenius pair}

A \dfn{left Frobenius pair} in $\cc$ is data
$(X,Y,\eta,\mu,\psi,\epsilon,\delta,\phi)$ consisting of
\begin{itemize}
\item objects $X,Y$ of $\cc$,
\item morphisms
  \begin{align*}
\epsilon &\colon Y \ra \one,  
&\eta &\colon \one \ra X,
\\ 
 \delta &\colon Y\ra Y\otimes Y,   
&\mu &\colon X\otimes X\ra X,
\\
 \phi &\colon X\ra Y\otimes X,
&\psi &\colon X\otimes Y\ra Y,
  \end{align*}
\end{itemize}
such that the following hold.
\begin{enumerate}
\item [(1a)] The triple $(X,\eta,\mu)$ is an associative monoid object (with
  unit) in $\cc$.  That is, the diagrams
\[\xymatrix{
{X\otimes X\otimes X} \ar[r]^-{\mu\otimes X} \ar[d]_{X\otimes \mu}
& {X\otimes X} \ar[d]^{\mu}
& {\one\otimes X} \ar[r]^{\eta\otimes X} \ar@{=}[dr]
& {X\otimes X}  \ar[d]^{\mu}
& {X\otimes \one} \ar[l]_{X\otimes \eta} \ar@{=}[dl]
\\
{X\otimes X} \ar[r]_{\mu}
& {X}
&& {X}
}\]
commute.
\item [(1b)] The triple $(Y,\epsilon,\delta)$ is an associative comonoid object (with
  counit) in $\cc$.  That is, the diagrams
\[\xymatrix{
{Y\otimes Y\otimes Y}
& {Y\otimes Y} \ar[l]_-{\delta\otimes Y}
& {\one\otimes Y} \ar@{=}[dr]
& {Y\otimes Y} \ar[l]_{\epsilon\otimes Y} \ar[r]^{Y\otimes \epsilon}
& {Y\otimes \one} \ar@{=}[dl]
\\
{Y\otimes Y} \ar[u]^{Y\otimes \delta}
& {Y} \ar[u]_{\delta} \ar[l]^{\delta}
&& {Y} \ar[u]_{\delta}
}\]
commute.
\item [(2a)] The pair $(Y,\psi)$ defines a left $X$-module structure.  That is, the
  diagrams 
\[\xymatrix{
{X\otimes X\otimes Y} \ar[r]^-{\mu\otimes Y} \ar[d]_{X\otimes \psi}
& {X\otimes Y} \ar[d]^{\psi}
& {\one\otimes Y} \ar[r]^{\eta\otimes Y}  \ar@{=}[dr]
& {X\otimes Y} \ar[d]^{\psi}
\\
{X\otimes Y} \ar[r]_{\psi}
& {Y}
&& {Y}
}\]
commute.
\item [(2b)] The pair $(X,\phi)$ defines a left $Y$-comodule structure.  That
  is, the diagrams
\[\xymatrix{
{Y\otimes Y\otimes X}
& {Y\otimes X} \ar[l]_-{\delta\otimes X}
& {\one\otimes X} \ar@{=}[dr]
& {Y\otimes X} \ar[l]_{\epsilon\otimes X} 
\\
{Y\otimes X} \ar[u]^{Y\otimes \phi}
& {X} \ar[u]_{\phi} \ar[l]^-{\phi}
&& {X} \ar[u]_{\phi}
}\]
commute.
\item [(3a)] The map $\phi$ is a homomorphism of left $X$-modules.
  That is, the diagram
\[\xymatrix{
{X\otimes X} \ar[r]^-{X\otimes \phi} \ar[d]_{\mu}
& {X\otimes Y\otimes X} \ar[d]^{\psi\otimes X}
\\
{X} \ar[r]_-{\phi} 
& {Y\otimes X} 
}\]
commutes.
\item [(3b)] The map $\psi$ is a homomorphism of left $Y$-comodules.
  That is, the diagram
\[\xymatrix{
{Y\otimes Y}
& {Y\otimes X\otimes Y} \ar[l]_-{Y\otimes \psi}
\\
{Y} \ar[u]^{\delta}
& {X\otimes Y} \ar[l]^-{\psi} \ar[u]_{\phi\otimes Y}
}\]
commutes.
\end{enumerate}

As a consequence of these axioms, we have the following.
\begin{prop}
The map $\phi$ is a homomorphism of right $X$-modules, and the map
$\psi$ is a homomorphism of right $Y$-comodules.  That is, the
diagrams
\[\xymatrix{
{X\otimes X} \ar[r]^-{\phi\otimes X} \ar[d]_{\mu}
& {Y\otimes X\otimes X} \ar[d]^{Y\otimes \mu}
& {Y\otimes Y}
& {X\otimes Y\otimes Y} \ar[l]_-{\psi\otimes Y}
\\
{X} \ar[r]_-{\phi}
& {Y\otimes X}
& {Y} \ar[u]^{\delta}
& {X\otimes Y} \ar[l]^-{\psi} \ar[u]_{X\otimes \delta}
}\]
commute.
\end{prop}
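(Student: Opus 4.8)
The plan is to show that the eight morphisms secretly encode a duality between $X$ and $Y$ in $\cc$, after which the Proposition becomes a formal consequence of the calculus of mates. Concretely, I would set $c:=\phi\circ\eta\colon\one\ra Y\otimes X$ and $e:=\epsilon\circ\psi\colon X\otimes Y\ra\one$ and first verify the two triangle identities for $(c,e)$, so that $Y$ is a dual of $X$ with $c$ and $e$ as (co)evaluation. Suppressing the unit coherence isomorphisms of $\cc$, the identity $(e\otimes\id_X)\circ(\id_X\otimes c)=\id_X$ unwinds to $(\epsilon\otimes\id_X)\circ(\psi\otimes\id_X)\circ(\id_X\otimes\phi)\circ(\id_X\otimes\eta)=\id_X$: axiom (3a) replaces $(\psi\otimes\id_X)\circ(\id_X\otimes\phi)$ by $\phi\circ\mu$, the unit axiom of (1a) gives $\mu\circ(\id_X\otimes\eta)=\id_X$, and the counit axiom of (2b) gives $(\epsilon\otimes\id_X)\circ\phi=\id_X$, so the composite collapses to $\id_X$. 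The second triangle identity $(\id_Y\otimes e)\circ(c\otimes\id_Y)=\id_Y$ is handled dually, using (3b), the unit axiom of (2a), and the counit axiom of (1b).

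Next I would use the dual pair $(c,e)$, and the natural bijections $\Hom_{\cc}(X\otimes A,B)\cong\Hom_{\cc}(A,Y\otimes B)$ it induces, to identify $\phi$ with the mate of $\mu$ and $\psi$ with the mate of $\delta$. The mate of $\phi\in\Hom_{\cc}(X,Y\otimes X)$ is $(e\otimes\id_X)\circ(\id_X\otimes\phi)=(\epsilon\otimes\id_X)\circ(\psi\otimes\id_X)\circ(\id_X\otimes\phi)$, which equals $\mu$ by (3a) and the counit axiom of (2b); applying the inverse bijection (valid by the triangle identities just proved) yields
\[
\phi=(\id_Y\otimes\mu)\circ(c\otimes\id_X)\colon X\cong\one\otimes X\ra Y\otimes X\otimes X\ra Y\otimes X.
\]
Symmetrically, the mate of $\psi\in\Hom_{\cc}(X\otimes Y,Y)$ is $(\id_Y\otimes\psi)\circ(c\otimes\id_Y)$, which equals $\delta$ by (3b) and the unit axiom of (2a); hence $\psi=(e\otimes\id_Y)\circ(\id_X\otimes\delta)$.

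Finally, both squares follow from these two formulas together with associativity of $\mu$ and coassociativity of $\delta$. For the first square, substituting $\phi=(\id_Y\otimes\mu)\circ(c\otimes\id_X)$ into $(\id_Y\otimes\mu)\circ(\phi\otimes\id_X)$ and applying associativity of $\mu$ (and the trivial fact that $c$, being tensored on the left, commutes with multiplication on the right factors) rewrites it as $(\id_Y\otimes\mu)\circ(c\otimes\id_X)\circ\mu=\phi\circ\mu$. The second square is the formal dual: substituting $\psi=(e\otimes\id_Y)\circ(\id_X\otimes\delta)$ into $(\psi\otimes\id_Y)\circ(\id_X\otimes\delta)$ and applying coassociativity of $\delta$ rewrites it as $\delta\circ(e\otimes\id_Y)\circ(\id_X\otimes\delta)=\delta\circ\psi$. (Alternatively, one observes that the whole axiom list is invariant under passing to the opposite monoidal category $\cc^{\op}$ while interchanging the roles of $X$ and $Y$; this symmetry exchanges the two assertions of the Proposition, so it suffices to prove one of them.)

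The main obstacle — and the only genuinely new input — is the first step: noticing that $c$ and $e$ assemble into a duality and checking the triangle identities. Once that is in hand, the rest is bookkeeping, the only care being to keep track of the associativity and unit coherence isomorphisms of $\cc$, since $\cc$ is not assumed symmetric (no braiding is used anywhere).
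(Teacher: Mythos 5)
Your proof is correct, but it takes a genuinely different route from the paper's. The paper's argument is a direct diagram chase: to prove the left‑hand square it builds a single large commutative diagram whose cells are filled in by naturality of $\otimes$, three applications of axioms (3a), (3b), (3a) along a middle row, and the unit/counit identities from (1b) and (2b). Your argument instead first establishes that $c=\phi\circ\eta$ and $e=\epsilon\circ\psi$ satisfy the two triangle identities (this is, in effect, the paper's Proposition~\ref{prop:axiom-4}, which the paper states and proves \emph{after} the present Proposition, though with no circularity since its proof does not invoke this one), then uses the resulting duality bijection $\lambda$ to show $\phi=\lambda(\mu)=(Y\otimes\mu)\circ(c\otimes X)$ and $\psi=\lambda^{-1}(\delta)=(e\otimes Y)\circ(X\otimes\delta)$ (this is essentially the content of the paper's Corollary~\ref{cor:duality-correspondences}), and finally deduces the two squares from associativity of $\mu$, coassociativity of $\delta$, and the interchange law. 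Both proofs use the same underlying identities; the difference is one of packaging. Your structural approach makes it transparent \emph{why} the squares hold (they are the mates, under the $(c,e)$‑duality, of the associativity and coassociativity squares), and the closing remark about the $\cc^{\op}$‑symmetry exchanging the two statements is a clean way to halve the work. The paper's diagram chase is shorter and more elementary in that it does not front‑load the dualizability result, which the author prefers to present afterwards as a separate payoff of the axioms.
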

\begin{proof}
The left-hand square follows from the commutativity of 
\[\xymatrix@C=40pt{
{X\otimes X}  \ar[r]^-{X\otimes \phi} \ar[d]_{\mu}
\ar@/^3pc/[rrr]_{\phi\otimes X}
& {X\otimes Y\otimes X} \ar[r]^-{\phi\otimes Y\otimes X}
\ar[d]_{\psi\otimes X}
& {Y\otimes X\otimes Y\otimes X} \ar[d]_{Y\otimes \psi\otimes X}
& {Y\otimes X\otimes X} \ar[l]_-{Y\otimes  X\otimes \phi} \ar[d]_{Y\otimes \mu}
\\
{X} \ar[r]_-{\phi}
& {Y\otimes X} \ar[r]_-{\delta\otimes X} \ar@{=}[dr]
& {Y\otimes Y\otimes X}  \ar[d]|{Y\otimes \epsilon \otimes X} 
& {Y\otimes X}  \ar[l]^-{Y\otimes \phi} \ar@{=}[dl]
\\
&& {Y\otimes X}
}\]
The top diamond commutes by naturality of the monoidal product.  The
three squares along the middle row commute by (3a), (3b), and (3a)
respectively.  The lower triangles follow from the unit identities of
(1b) and (2b).

The commutativity of the right-hand square is proved similarly.
\end{proof}

\subsection*{Definition of right Frobenius pair}

We may similarly define the notion of a \dfn{right Frobenius pair} by
interchanging ``left'' and ``right'' suitably; thus, a right Frobenius pair in
$(\cc,\otimes,\one)$ is precisely a left Frobenius pair in
$(\cc,\otimes^\op,\one)$, where $A\otimes^\op B\defeq B\otimes A$.

\subsection*{Definition of commutative Frobenius pair}

If $(\cc,\otimes,\one)$ is a symmetric monoidal category, we define
a \dfn{commutative Frobenius pair} in $\cc$ to be a left Frobenius pair such that
$\mu$ and $\delta$ are \emph{commutative} product and coproduct,
respectively.  A commutative Frobenius pair will necessarily be both a left
and right Frobenius pair.

\subsection*{Frobenius objects are Frobenius pairs}

Suppose that a left Frobenius pair can be described by
\[
(X,Y,\eta,\mu,\psi,\epsilon,\delta,\phi)=(A,A,\eta,\mu,\mu,\epsilon,\delta,\delta).
\]
Then we see that $(A,\eta,\mu,\epsilon,\delta)$ is precisely what is
usually termed a \dfn{Frobenius object} in $\cc$.  In particular,
axioms (2a) and (2b) are redundant, being in this case consequences of
(1a) and (1b).  Note that this object will also be a right Frobenius
pair.

\subsection*{Frobenius pairs and dualizability}

A Frobenius pair always provides a pair of dualizable objects.  Thus,
if $(X,Y,\eta,\mu,\psi,\epsilon,\delta,\phi)$ is a left Frobenius
pair, we may define maps 
\[
\alpha\defeq \phi\circ \eta \colon \one\ra Y\otimes X,\qquad
\beta\defeq \epsilon\circ\psi \colon X\otimes Y\ra \one,
\]
and we have the following proposition.
\begin{prop}\label{prop:axiom-4}
Given a left Frobenius pair in $\cc$ as above, the data
$(X,Y,\alpha,\beta)$ makes $X$ 
into a \dfn{left dualizable object} of $\cc$.  That is, the composites
\[
X=X\otimes \one \xra{X\otimes \alpha} X\otimes Y\otimes X
\xra{\beta\otimes X} \one\otimes X=X
\]
and
\[
Y=\one\otimes Y \xra{\alpha\otimes Y} Y\otimes X\otimes Y
\xra{Y\otimes \beta} Y\otimes \one=Y
\]
are the identity maps of $X$ and $Y$ respectively.  Thus, we obtain
\dfn{duality 
isomorphisms}
\[
\lambda\colon \Hom_\cc(X\otimes U,V) \xra{\sim} \Hom_\cc(U,Y\otimes V),
\]
and
\[
\rho\colon \Hom_\cc(U\otimes Y,V) \xra{\sim} \Hom_\cc(U,V\otimes X),
\]
natural in objects $U$ and $V$ of $\cc$.
\end{prop}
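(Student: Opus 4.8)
The plan is to verify the two triangle identities directly, by unwinding the definitions of $\alpha$ and $\beta$ and applying the Frobenius pair axioms one at a time; the duality isomorphisms $\lambda$ and $\rho$ then follow from the standard formal argument for dualizable objects.

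For the first composite, I would substitute $\alpha=\phi\circ\eta$ and $\beta=\epsilon\circ\psi$ to rewrite $X\otimes\one\xra{X\otimes\alpha}X\otimes Y\otimes X\xra{\beta\otimes X}\one\otimes X$ as
\[
X\otimes\one\xra{X\otimes\eta}X\otimes X\xra{X\otimes\phi}X\otimes Y\otimes X\xra{\psi\otimes X}Y\otimes X\xra{\epsilon\otimes X}\one\otimes X.
\]
Axiom (3a) identifies $(\psi\otimes X)\circ(X\otimes\phi)$ with $\phi\circ\mu$; the unit identity in (1a) then collapses $\mu\circ(X\otimes\eta)$ to the coherence isomorphism $X\otimes\one\cong X$; and the counit identity in (2b) collapses $(\epsilon\otimes X)\circ\phi$ to the coherence isomorphism $X\cong\one\otimes X$. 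Composing these coherence isomorphisms gives $\id_X$, as required.

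The second composite is handled symmetrically. Writing it out as
\[
\one\otimes Y\xra{\eta\otimes Y}X\otimes Y\xra{\phi\otimes Y}Y\otimes X\otimes Y\xra{Y\otimes\psi}Y\otimes Y\xra{Y\otimes\epsilon}Y\otimes\one,
\]
axiom (3b) replaces $(Y\otimes\psi)\circ(\phi\otimes Y)$ by $\delta\circ\psi$; the unit identity in (2a) collapses $\psi\circ(\eta\otimes Y)$ to the coherence isomorphism $\one\otimes Y\cong Y$; and the counit identity in (1b) collapses $(Y\otimes\epsilon)\circ\delta$ to $Y\cong Y\otimes\one$, yielding $\id_Y$.

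Finally, with both triangle identities in hand, $\alpha$ and $\beta$ exhibit $X$ as a left dualizable object with dual $Y$, and the required isomorphisms are the usual adjunction bijections: $\lambda$ sends $f\colon X\otimes U\ra V$ to $(Y\otimes f)\circ(\alpha\otimes U)\colon U\ra Y\otimes V$, with inverse $g\mapsto(\beta\otimes V)\circ(X\otimes g)$, and $\rho$ sends $f\colon U\otimes Y\ra V$ to $(f\otimes X)\circ(U\otimes\alpha)\colon U\ra V\otimes X$, with inverse $g\mapsto(V\otimes\beta)\circ(g\otimes Y)$; that these are mutually inverse and natural in $U$ and $V$ is immediate from naturality of $\otimes$ together with the two triangle identities. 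There is no real obstacle here---the entire content lies in the bookkeeping of which axiom to apply at which stage of each zigzag---so the only care needed is to keep track of the associativity and unit coherence isomorphisms that I have suppressed throughout.
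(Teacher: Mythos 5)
Your proof is correct and takes essentially the same route the paper intends: the paper simply calls the argument ``straightforward from the definitions,'' records the explicit formulas for $\lambda$, $\lambda^{-1}$, $\rho$, $\rho^{-1}$ (which match yours exactly), and notes that only axioms (3a), (3b) and the unit/counit conditions from (1a), (1b), (2a), (2b) are used --- precisely the six ingredients you invoke in verifying the two triangle identities.
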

\begin{proof}
Straightforward from the definitions.  Explicitly,
\[
\lambda(f) =(Y\otimes f)\circ (\phi\eta\otimes U),\qquad
\lambda^{-1}(g) = (\epsilon\psi\otimes V)\circ(X\otimes g),
\]
and
\[
\rho(f) = (f\otimes X)\circ (U\otimes \phi\eta),\qquad
\rho^{-1}(g) = (V\otimes \epsilon \psi)\circ (g\otimes Y).
\]
Note that only properties (3a),
(3b),
and the unit 
and counit 
conditions from (1a), (1b), (2a), (2b) are required; the maps $\delta$
and $\mu$ are not used in this proof.
\end{proof}

\begin{cor}\label{cor:duality-correspondences}
Given a left Frobenius pair in $\cc$ as above,  its duality
isomorphisms give correspondences
\[\xymatrix@R=5pt{
{\Hom_\cc(X\otimes X,X)}
\ar@{<->}[r]^{\lambda}
& {\Hom_\cc(X,Y\otimes X)}
\ar@{<->}[r]^{\rho}
& {\Hom_\cc(X\otimes Y,Y)}
\ar@{<->}[r]^{\lambda}
& {\Hom_\cc(Y,Y\otimes Y)}
\\
{\mu} \ar@{<->}[r]
& {\phi} \ar@{<->}[r]
& {\psi} \ar@{<->}[r]
& {\delta}
}\]
and
\[\xymatrix@R=5pt{
{\Hom_\cc(\one,X)}
\ar@{<->}[r]^{\rho}
& {\Hom_\cc(Y,\one)}
\\
{\eta} \ar@{<->}[r]
& {\epsilon}
}\]
\end{cor}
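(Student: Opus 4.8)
The plan is to verify each of the indicated correspondences by substituting the explicit formulas for $\lambda$, $\rho$, $\lambda^{-1}$, $\rho^{-1}$ recorded in Proposition~\ref{prop:axiom-4} and then simplifying. It is worth observing at the outset, as noted after that proposition, that $\mu$ and $\delta$ do not occur in those formulas, so that each of the four claimed identities reduces to showing that some composite built only from $\eta$, $\epsilon$, $\phi$, $\psi$ agrees with the morphism named on the right-hand side.

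Concretely, for $\lambda(\mu)=\phi$ I would write $\lambda(\mu)=(Y\otimes\mu)\circ(\phi\eta\otimes X)=(Y\otimes\mu)\circ(\phi\otimes X)\circ(\eta\otimes X)$, identify $(Y\otimes\mu)\circ(\phi\otimes X)$ with $\phi\circ\mu$ using the right $X$-module homomorphism property established in the proposition asserting that $\phi$ and $\psi$ are also right module and comodule homomorphisms, and then absorb $\mu\circ(\eta\otimes X)=\id_X$ by the unit condition of~(1a). The identity $\rho(\psi)=\phi$ is the mirror statement: $\rho(\psi)=(\psi\otimes X)\circ(X\otimes\phi)\circ(X\otimes\eta)$, where $(\psi\otimes X)\circ(X\otimes\phi)=\phi\circ\mu$ is axiom~(3a) and $\mu\circ(X\otimes\eta)=\id_X$ is again~(1a). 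Likewise $\lambda(\psi)=\delta$ comes from $\lambda(\psi)=(Y\otimes\psi)\circ(\phi\otimes Y)\circ(\eta\otimes Y)$, rewriting $(Y\otimes\psi)\circ(\phi\otimes Y)=\delta\circ\psi$ by axiom~(3b) and then using $\psi\circ(\eta\otimes Y)=\id_Y$ from the unit condition of~(2a). Finally $\rho(\epsilon)=\eta$ is essentially immediate, since $\rho(\epsilon)=(\epsilon\otimes X)\circ\phi\circ\eta$ and $(\epsilon\otimes X)\circ\phi=\id_X$ by the counit condition of~(2b).

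I do not anticipate a genuine obstacle; this is a short diagram chase rather than a statement with a hard core, and the correspondence $\eta\leftrightarrow\epsilon$ is entirely trivial. The only care required is clerical: matching each of the three ``middle'' rewrites to the correct homomorphism axiom --- namely (3a), (3b), or the right-handed consequence --- and then invoking precisely the unit or counit identity from (1a), (2a), (2b) needed to cancel the leftover unit map. As an internal consistency check one can instead run the computations through the inverse maps: for example $\lambda^{-1}(\phi)=(\epsilon\psi\otimes X)\circ(X\otimes\phi)$ equals $\mu$ by axiom~(3a) together with the counit condition of~(2b), and similarly for the others, so the same four correspondences reappear.
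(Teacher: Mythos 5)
Your proposal is correct and fills in exactly the verification the paper glosses with ``Straightforward, using the axioms and Proposition~\ref{prop:axiom-4}'': one substitutes the explicit formulas for $\lambda$, $\rho$ and simplifies via (3a), (3b), the right-handed consequence, and the relevant unit/counit identities. This is the same approach as the paper, just spelled out.
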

\begin{proof}
Straightforward, using the axioms and \eqref{prop:axiom-4}.
\end{proof}

\begin{rem}
If $\cc$ is a \emph{symmetric closed monoidal category}, and so
admits an internal function 
object $\uHom({-},{-})$ satisfying $\Hom_{\cc}(Z\otimes A,B)\approx
\Hom_{\cc}(A,\uHom(Z,B))$, then dualizability of $X$
gives rise to 
isomorphisms
\[
\uHom(X\otimes U,V)\approx \uHom(U,Y\otimes V).
\]
In particular, by taking $U=V=\one$ we obtain
an isomorphism $\uHom(X,\one)\approx Y$.
\end{rem}

\section{Atiyah duality}

In this section, we associate to a compact smooth manifold $M$ a
Frobenius pair in the stable homotopy category with objects
$\Sigma^\infty_+M$ and $M^{-\tau}$.  We do this simply by constructing
the relevant objects and maps, and showing that the appropriate
diagrams 
commute up to homotopy.

We will use the following constructions and notation.
\begin{itemize}
\item For a vector bundle $V\ra M$, we write $M^V$ for the associated
  Thom space; we may regard $M^V$ as a spectrum by applying
  $\Sigma^\infty$.  

\item In particular, $M^0\approx \Sigma^\infty(M_+)$, and $(*)^0$ is
  the stable $0$-sphere, which we denote by $\one$.

\item Given an embedding $f\colon M\looparrowright N$ of manifolds, we
  write $\nu(f)\ra M$ for the normal bundle of the embedding.

\item For an embedding $f\colon M\looparrowright N$, we write
  $\widehat{f}\colon N^0\ra M^{\nu(f)}$ for the associated collapse
  map. 

\item More generally, given a sequence of embeddings $f\colon
  M\looparrowright N$ and $g\colon N\looparrowright P$, we obtain a
  collapse map of $N$ to $M$ with respect to the ambient space $P$, of
  the form
\[
\widehat{f}^{\nu(g)} \colon N^{\nu(g)} \ra M^{\nu(gf)}.
\]

\item Given embedding $f\colon M\looparrowright N$
  and a vector bundle $V\ra N$, we obtain a collapse map
  $\widehat{f}^V\colon N^V\ra 
  M^{f^*V\oplus \nu(f)}$ by regarding $N\subset V$ as the $0$-section,
  so that $f^*V\oplus \nu(f)$ is equivalent to the normal bundle of
  $M\subset V$;
  we may think of the map $\widehat{f}^V$ as a collapse map
  ``twisted''by the bundle $V$.

\item We extend our Thom space notation to \emph{Thom spectra of
    virtual bundles}. 
  Thus, $M^{V-W}$ denotes the Thom spectrum of the virtual bundle
  $V-W$.  Likewise, given any smooth map $f\colon M\ra N$ and virtual
  bundle $V\ra N$, we can associate maps $f^V\colon M^{f^*V}\ra N^V$
  and $\widehat{f}^V\colon N^V\ra M^{\nu(f)\oplus V}$.  
\end{itemize}

Especially significant is the following observation.

\subsection*{Homotopy invariance of the collapse map} 
The construction of collapse maps is \emph{homotopy invariant}
  with respect to 
  smooth isotopies.  That is, given smooth isotopies
  $f_0\sim f_1\colon M\looparrowright N$ and $g_0\sim g_1\colon
  N\looparrowright P$ we obtain homotopy equivalences $N^{\nu(g_0)}
  \approx N^{\nu(g_1)}$ and $M^{\nu(g_0f_0)}\approx M^{\nu(g_1f_1)}$,
  which fit in a homotopy commutative diagram
\[\xymatrix{
{M^{\nu(g_0f_0)}} \ar@{<->}[d]_{\sim}
& {N^{\nu(g_0)}} \ar@{<->}[d]^{\sim} \ar[l]_-{\widehat{f_0}^{\nu(g_0)}}
\\
{M^{\nu(g_1f_1)}}
& {N^{\nu(g_1)}} \ar[l]^-{\widehat{f_1}^{\nu(g_1)}}
}\]
We will usually apply this in cases where the \emph{space of choices} of
suitable isotopies turns out to be contractible (or at least, with
connectivity approaching $\infty$ as some parameter $B\to\infty$).
When we can restrict to such a space of choices for our isotopies, it
is clear that all such choices lead to the \emph{same} (up to
homotopy!) homotopy
equivalences $N^{\nu(g_0)}\approx N^{\nu(g_1)}$ and
$M^{\nu(g_0f_0)}\approx M^{\nu(g_1f_1)}$, and thus we may
unambiguously identify these objects (in the homotopy category of
spaces, or spectra as may be); with respect to this identification,
the collapse maps $\widehat{f_0}^{\nu(g_0)}$ and
$\widehat{f_1}^{\nu(g_1)}$ are the same up to homotopy.

\subsection*{The Frobenius pair structure on the stabilization of a smooth manifold}

The Frobenius pair associated to a smooth compact manifold $M$ consists of
spectra $X=M^{-\tau}$ and $Y=M^0$, and maps
\begin{align*}
  \epsilon \colon & M^0\ra \one,
&  \eta \colon & \one \ra M^{-\tau},
\\
  \delta \colon & M^0 \ra M^0\sm M^0,
&  \mu  \colon & M^{-\tau}\sm M^{-\tau}\ra M^{-\tau},
\\
  \phi\colon & M^{-\tau} \ra M^0\sm M^{-\tau},
&   \psi\colon & M^{-\tau}\sm M^0\ra M^0.
\end{align*}
This data will satisfy the axioms for a Frobenius pair in the homotopy
category of spectra.  In brief, the maps in the left-hand column
($\epsilon$, $\delta$, and $\phi$) 
are stabilizations of certain maps between manifolds (or bundles), while
the maps in the right-hand column
($\eta$, $\mu$, and $\psi$) are obtained as Pontryagin-Thom collapse
maps associated to certain embeddings.

We take up the definition of each map, and a sketch of the proofs of
each axiom, in turn.

\subsection*{The maps $\epsilon$ and $\delta$}  These are just the maps
$p^0\colon M^0\ra (*)^0\approx \one$ and $d^0\colon M^0\ra (M\times
M)^0\approx 
M^0\sm M^0$ of
spectra obtained from the projection and diagonal maps $p\colon M\ra
*$ and $d\colon M\ra M\times M$ 

\subsection*{Axiom (1b).}  Clear.

\subsection*{The map $\eta$.}  
Choose an embedding $j\colon M\looparrowright \R^B$ into a Euclidean
space of large 
dimension.  We obtain the Pontryagin-Thom collapse map
\[
\widehat{j}\colon S^B\approx (\R^B)^0 \ra M^{\nu(j)}.
\]
For $B$ sufficiently large, any two embeddings are isotopic; in fact,
as $B\to\infty$, the space of embeddings becomes contractible.  Thus,
this construction produces a well-defined stable homotopy class of
maps
\[
\eta=\widehat{p}\colon \one\ra M^{-\tau};
\]
we write $-\tau$ for the virtual bundle $\nu(j)-\underline{B}$.

\subsection*{The map $\phi$.}  
Choose an embedding $j\colon M\looparrowright \R^B$, and thus a normal
bundle $\nu(j)$ over $M$.  Pulling back the bundle $0\times \nu(j)$
over $M\times M$ along the diagonal map induces a map
\[
d^{0\times \nu(j)}\colon M^{\nu(j)} \ra (M\times M)^{0\times \nu(j)} 
\]
on Thom spaces.  Stabilizing, we obtain $\phi=d^{0\times
  (-\tau)}\colon M^{-\tau}\ra M^0\sm M^{-\tau}$.

\subsection*{Axiom (2b).}  This is standard.

\subsection*{The map $\mu$.}  Pick any pair of embeddings $j_1,j_2\colon
M\looparrowright \R^B$, and consider the sequence of embeddings
\[
M\xra{d} M\times M \xra{j_1\times j_2} \R^B\times \R^B.
\]
We obtain a collapse map associated to the embedding $d$ inside the
ambient space $\R^B\times \R^B$, of the form
\[
\widehat{d}^{\nu(j_1)\times \nu(j_2)}\colon (M\times
M)^{\nu(j_1)\times \nu(j_2)} \ra M^{\nu((j_1,j_2))}. 
\]
As noted, all choices of embedding of $M\looparrowright\R^\infty$ live
in a contractible parameter space; thus after stabilizing, the
embeddings $j_1$, $j_2$, and $(j_1,j_2)$ give the same Thom spectrum
$M^{-\tau}$, and we obtain a map
$\mu=\widehat{d}^{(-\tau)\times (-\tau)}\colon M^{-\tau}\sm M^{-\tau}
\approx (M\times M)^{(-\tau)\times (-\tau)} \ra M^{-\tau}$.  (Compare
\cite{cohen-multiplicative-thom}, where the map $\mu$ is constructed
and refined to a strictly commutative multiplication on the spectrum
(without strict unit).) 

\subsection*{Axiom (1a).}  To prove the unit identity $\mu\circ
(\eta\otimes X)=1_X$, consider the sequence of embeddings
\[
M\xra{d} M\times M\xra{j_1\times \id} \R^B\times M \xra{\id\times j_2}
\R^B\times \R^B.
\]
The collapse map obtained from the embedding $(j_1,\id)=(j_1\times
\id)\circ d \colon M\looparrowright \R^B\times M$  inside the
ambient space $\R^B\times \R^B$ is a composite
\[
S^B\sm M^{\nu(j_2)} \ra (M\times M)^{\nu(j_1)\times \nu(j_2)} \ra
M^{\nu(j_1,j_2)},
\]
which can be written as
\[
S^B \sm M^{\nu(j_2)} \xra{\widehat{j_1}\sm \id} M^{\nu(j_1)}\sm
M^{\nu(j_2)} \xra{\widehat{d}^{\nu(j_1)\times \nu(j_2)}} M^{\nu(j_1,j_2)},
\]
which when stabilized realizes $\one \sm M^{-\tau} \xra{\eta \sm\id}
M^{-\tau}\sm M^{-\tau}\xra{\mu} M^{-\tau}$.  

On the other hand, the
map $j_1\colon M\ra \R^B$ is homotopic through smooth maps to a
constant map $0\colon M\ra \R^B$, and such a homotopy produces an
isotopy $(j_1,\id)\sim (0,\id)$ of embeddings $M\ra \R^B\times M$.
(The space of smooth maps $j_1\colon M\ra \R^B$ is contractible, so there
is no ambiguity created by the 
choice of homotopy.) 
The sequence of embeddings
\[
M\xra{(0,\id)} \R^B\times M \xra{\id\times j_2} \R^B\times \R^B
\]
induces a collapse map
\[
S^B\sm M^{\nu(j_2)} \ra M^{\nu(0,j_2)}\approx M^{\underline{B}\oplus
  \nu(j_2)}
\]
which is manifestly homotopic to the identity.  According to the homotopy
invariance of the collapse map with respect to isotopies, this map
may be naturally identified up to homotopy with the composite
$\widehat{d}^{\nu(j_1)\times \nu(j_2)}\circ (\widehat{j_1}\sm \id)$.  

The unit identity $\mu\circ (X\otimes \eta)=1_X$ is proved similarly.

To prove the associativity identity, choose embeddings
$j_1,j_2,j_3\colon M\looparrowright \R^B$, and consider the
commutative square of 
embeddings
\[\xymatrix{
{M} \ar[r]^-{d} \ar[d]_{d}
& {M\times M} \ar[d]^{d\times \id}
\\
{M\times M} \ar[r]_-{\id\times d}
& {M\times M\times M}
}\]
The diagram of associated collapse maps of the submanifolds inside
$\R^B\times \R^B\times \R^B$ has the form 
\[\xymatrix{
{M^{\nu((j_1,j_2,j_3))}} 
&&& {(M\times M)^{\nu((j_1,j_2))\times \nu(j_3)}}
\ar[lll]_-{\widehat{d}^{\nu((j_1,j_2))\times \nu(j_3)}}
\\
{(M\times M)^{\nu(j_1)\times \nu((j_2,j_3))}}
\ar[u]^{\widehat{d}^{\nu(j_1)\times \nu((j_2,j_3))}}
&&& {(M\times M\times M)^{\nu(j_1)\times \nu(j_2)\times \nu(j_3)}}
\ar[lll]^-{\widehat{\id\times d}^{\nu(j_1)\times \nu(j_2)\times
    \nu(j_3)}} \ar[u]_{\widehat{d\times \id}^{\nu(j_1)\times
    \nu(j_2)\times \nu(j_3)}}
}\]
which after stabilizing is the desired commutative diagram.

\subsection*{The map $\psi$.}
Pick an embedding $j\colon M\looparrowright \R^B$, and consider the
sequence of embeddings
\[
M\xra{d} M\times M\xra{j\times \id} \R^B\times M.
\]
Forming the collapse map of $d$ with respect to the ambient space $\R^B\times
M$ gives 
\[
\widehat{d}^{\nu(j)\times 0} \colon (M\times M)^{\nu(j)\times 0} \ra
M^{\nu((j,\id))}.
\]
By choosing any smooth homotopy of $j\colon M\ra \R^B$ to a constant
map, we obtain an isotopy $(j,\id)\sim (0,\id)$, which provides a
bundle equivalence $\nu((j,\id))\approx \nu((0,\id))=\underline{B}$.
That is, we obtain a map $M^{\nu(j)}\sm M^0\ra M^{\underline{B}}$; we
let $\psi=\widehat{d}^{(-\tau)\times 0} \colon M^{-\tau}\sm M^0\ra
M^0$ be the map obtained after stabilization.

\subsection*{Axiom (2a).}
Choose an embedding $j\colon M\looparrowright \R^B$.  
Since the composite 
\[
M\xra{d} M\times M\xra{j\times \id} \R^B\times M
\]
is isotopic to $(0,\id)\colon M\looparrowright \R^B\times M$ by means
of a smooth homotopy $j\sim 0$, the composite of collapse maps 
\[S^B\sm M \xra{\widehat{j}\sm \id} M^{\nu(j)}\sm M^0
\xra{\widehat{d}^{\nu(j)\times 0}} M^{\nu(j,\id)}
\]
is homotopic to the collapse map of $(0,\id)\colon M\ra \R^B\times M$,
which is homotopic to the identity map of $S^B\sm M^0$; this proves
the unit identity.  

Likewise, the commutative square of diagonal embeddings into $M\times
M\times M\subset \R^B\times \R^B\times M$ induces a commutative square
of collapse maps (relative to the ambient space $\R^B\times \R^B\times
M$), 
\[\xymatrix{
{M^{\nu((j_1,j_2,\id))}} 
&&& {(M\times M)^{\nu(j_1,j_2)\times 0}}
\ar[lll]_-{\widehat{d}^{\nu(j_1,j_2)\times 0}}
\\
{(M\times M)^{\nu(j_1)\times \nu((j_2,\id))}}
\ar[u]^{\widehat{d}^{\nu(j_1)\times \nu((j_2,\id))}}
&&& {(M\times M\times M)^{\nu(j_1)\times \nu((j_2)\times 0}}
\ar[lll]^-{\widehat{\id \times d}^{\nu(j_1)\times \nu(j_2)\times 0}}
\ar[u]_{\widehat{d\times \id}^{\nu(j_1)\times \nu(j_2)\times 0}}
}\]
which is precisely the associativity identity.

\subsection*{A transversality diagram.}
Suppose given a commutative diagram of manifolds
\[\xymatrix{
{A_1} \ar[r]^{h_1} \ar[d]_{f}
& {B_1} \ar[d]^{g} \ar[r]^{k_1}
& {C_1}
\\
{A_2} \ar[r]_{h_2}
& {B_2} \ar[r]_{k_2}
& {C_2}
}\]
in which $h_1,h_2,k_1,k_2$ are embeddings, and $A_1$ is the
transversal intersection of $A_2$ along $g$.  Furthermore, suppose we
are given a bundle equivalence $\beta\colon \nu(k_1)\xra{\sim}
g^*\nu(k_2)$ over $B_1$, which induces a bundle equivalence
$\alpha\colon \nu(k_1h_1)\xra{\sim} f^*\nu(k_2h_2)$ over $A_1$ (using
the evident equivalence $\nu(h_1)\approx f^*\nu(h_2)$.  

Then we obtain a homotopy commutative diagram 
\[\xymatrix{
{A_1^{\nu(k_1h_1)}} \ar[d]_{f^{\nu(k_2h_2)}}
& {B_1^{\nu(k_1)}} \ar[d]^{g^{\nu(k_2)}} \ar[l]_-{\widehat{h_1}^{\nu(k_1)}}
\\
{A_2^{\nu(k_2h_2)}} 
& {B_2^{\nu(k_2)}} \ar[l]^-{\widehat{h_2}^{\nu(k_2)}}
}\]
in which the vertical maps are inclusions of Thom spaces induced by
the pullback squares
\[\xymatrix{
{\nu(k_1h_1)} \ar[r]^{\alpha} \ar[d]
& {\nu(k_2h_2)} \ar[d]
& {\nu(k_1)} \ar[r]^{\beta} \ar[d]
& {\nu(k_2)} \ar[d]
\\
{A_1} \ar[r]_{f} 
& {A_2} 
& {B_1} \ar[r]_{g}
& {B_2}
}\]
and the
horizontal maps are collapse maps.

\subsection*{Axiom (3a).}
Consider 
\[\xymatrix{
{M} \ar[r]^-{d} \ar[d]_{d}
& {M\times M} \ar[d]^{\id\times d} \ar[rr]^{j_1\times j_3}
&& {\R^B\times \R^B}
\\
{M\times M} \ar[r]_-{d\times \id}
& {M\times M\times M} \ar[rr]_{j_1\times \id\times j_3}
&& {\R^B\times M\times \R^B}
}\]
where $j_1,j_3\colon M\looparrowright \R^B$ are embeddings, where
\[
\beta\colon \nu(j_1)\times \nu(j_3) \xra{\sim} (\id \times
d)^*(\nu(j_1)\times 0\times \nu(j_3)) 
\]
is the obvious bundle isomorphism, and
\[
\alpha\colon \nu((j_1,j_3)) \xra{\sim} d^*(\nu((j_1,\id))\times
\nu(j_3))
\]
is the bundle isomorphism obtained by pulling back $\beta$ along the
horizontal maps.
We obtain a commutative square
\[\xymatrix{
{M^{\nu((j_1,j_3))}} \ar[d]_{d^{\nu((j_1,\id))\times \nu(j_3)}} 
&&& {(M\times M)^{\nu(j_1)\times \nu(j_3)}}
\ar[d]^{(\id\times d)^{\nu(j_1)\times 0\times \nu(j_3)}}
\ar[lll]_-{\widehat{d}^{\nu(j_1)\times \nu(j_3)}}
\\
{(M\times M)^{\nu((j_1,\id))\times \nu(j_3)}}
&&& {(M\times M\times M)^{\nu(j_1)\times 0\times \nu(j_3)}}
\ar[lll]^-{\widehat{d\times \id}^{\nu(j_1)\times 0\times \nu(j_3)}}
}\]
Choose a smooth homotopy of $j_1\colon M\ra \R^B$ to a constant map,
thus producing isotopies $(j_1,j_3)\sim (0,j_3)$ of embeddings
$M\looparrowright \R^B\times \R^B$ and $(j_1,\id)\times j_3 \sim
(0,\id)\times j_3$ of embeddings $M\times M\looparrowright \R^B\times
M\times \R^B$.  In addition, we may use this same homotopy to form a
$1$-parameter family 
of bundle maps over $d\colon M\ra M\times M$, between $\alpha$ and the
evident bundle isomorphism
\[
\alpha'\colon \nu((0,j_3))\xra{\sim}
d^*(\nu((0,\id))\times \nu(j_3)).
\]
Thus, $d^{\nu((j_1,\id))\times \nu(j_3)} \colon M^{\nu((j_1,j_3))} \ra
(M\times M)^{\nu((j_1,\id))\times \nu(j_3)}$ is homotopic to 
\[
M^{\underline{B}\oplus \nu(j_3)} \ra M^{\underline{B}}\sm M^{\nu(j_3)}.
\]
After stabilizing, the above diagram is the homotopy commutative
diagram
\[\xymatrix{
{M^{-\tau}}  \ar[d]_{\phi}
& {M^{-\tau}\sm M^{-\tau}} \ar[d]^{\id\sm \phi} \ar[l]_-{\mu}
\\
{M^0\sm M^{-\tau}} 
& {M^{-\tau}\sm M^0\sm M^{-\tau}} \ar[l]^-{\psi\sm \id}
}\]

\subsection*{Axiom (3b).}
Consider 
\[\xymatrix{
{M} \ar[r]^-{d} \ar[d]_{d}
& {M\times M} \ar[d]^{d\times \id} \ar[rr]^{j\times \id}
&& {\R^B\times M}
\\
{M\times M} \ar[r]_-{\id\times d}
& {M\times M\times M} \ar[rr]_{\id\times j\times \id}
&& {M\times \R^B\times M}
}\]
where $j\colon M\looparrowright \R^B$ is an embedding, where
\[
\beta\colon \nu(j)\times 0 \xra{\sim} (d \times
\id)^*(0\times \nu(j)\times 0) 
\]
is the obvious bundle isomorphism, and
\[
\alpha\colon \nu((j,\id)) \xra{\sim} d^*(0\times \nu(j,\id))
\]
is the bundle isomorphism obtained by pulling back $\beta$ along the
horizontal maps.
We obtain a commutative square
\[\xymatrix{
{M^{\nu((j,\id))}} \ar[d]_{d^{0\times \nu((j,\id))}} 
&&& {(M\times M)^{\nu(j)\times 0}}
\ar[d]^{(d\times \id)^{0\times \nu(j)\times 0}}
\ar[lll]_-{\widehat{d}^{\nu(j)\times 0)}}
\\
{(M\times M)^{0\times \nu((j,\id))}}
&&& {(M\times M\times M)^{0\times \nu(j)\times 0}}
\ar[lll]^-{\widehat{\id \times d}^{0\times \nu(j)\times 0}}
}\]
Choose a smooth homotopy of $j\colon M\ra \R^B$ to a constant map,
thus producing isotopies $(j,\id)\sim (0,\id)$ of embeddings
$M\looparrowright \R^B\times M$ and $\id\times (j,\id) \sim
\id \times (0,\id)$ of embeddings $M\times M\looparrowright M\times \R^B\times
M$.  In addition, we may use this same homotopy to form a
$1$-parameter family 
of bundle maps over $d\colon M\ra M\times M$, between $\alpha$ and the
evident bundle isomorphism
\[
\alpha'\colon \underline{B}\xra{\sim}
d^*(0\times\underline{B}).
\]
Thus, $d^{0\times \nu((j,\id))} \colon M^{\nu((j,\id))} \ra
(M\times M)^{0\times \nu((j_1,\id))}$ is homotopic to 
\[
M^{\underline{B}} \ra M^{0}\sm M^{\underline{B}}.
\]
After stabilizing, the above diagram is the homotopy commutative
diagram
\[\xymatrix{
{M^{0}}  \ar[d]_{\delta}
& {M^{-\tau}\sm M^{0}} \ar[d]^{\phi\sm \id} \ar[l]_-{\psi}
\\
{M^0\sm M^{0}} 
& {M^{0}\sm M^{-\tau}\sm M^{0}} \ar[l]^-{\id \sm\psi} 
}\]

\subsection*{Commutativity}

Let $\sigma\colon X\sm Y\ra Y\sm X$ denote the symmetry of the smash
product in the homotopy category of spectra.
It is immediate that $\sigma\circ \delta\approx \delta$, from the
symmetry of the diagonal embedding $d\colon M\ra M\times M$.
To show that $\mu\circ\sigma \approx \mu$, it suffices to note that if
$j_1,j_2\colon M\looparrowright \R^B$ are embeddings, then for $B$
sufficiently large the embeddings $(j_1,j_2), (j_1,j_1)\colon
M\looparrowright \R^B\times\R^B$ are isotopic, 

\subsection*{Atiyah duality}

We have shown the following.
\begin{prop}
Let $M$ be a smooth compact manifold.  Then the pair of spectra
$\Sigma^\infty M_+$ and $M^{-\tau}$ admit the structure of a
commutative Frobenius pair
in the homotopy category of spectra.
\end{prop}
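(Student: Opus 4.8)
The plan is to set $X=M^{-\tau}$ and $Y=M^0=\Sigma^\infty M_+$, to equip them with the six structure maps $\epsilon,\eta,\delta,\mu,\phi,\psi$ constructed above, and to verify the axioms of a left Frobenius pair together with the commutativity of $\mu$ and $\delta$; by the definition of a commutative Frobenius pair given above this will suffice. At the level of bookkeeping the proof is then an assembly of the preceding paragraphs: properties (1a), (1b), (2a), (2b), (3a), (3b) are exactly the content of the correspondingly labelled paragraphs, and the commutativity of $\mu$ and $\delta$ is the content of the paragraph on commutativity. It is worth recording why each ingredient goes through.

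First I would settle well-definedness. Each of $\eta$, $\mu$, $\psi$ is defined via a Pontryagin--Thom collapse map attached to an auxiliary embedding $j\colon M\looparrowright\R^B$ (and, for $\mu$ and $\psi$, an auxiliary smooth homotopy of some $j_i$ to a constant map), while $\epsilon$, $\delta$, $\phi$ are stabilizations of maps on Thom spaces pulled back along the projection $p\colon M\to *$ or the diagonal $d\colon M\to M\times M$. Since the spaces of embeddings $M\looparrowright\R^B$ and of smooth maps $M\to\R^B$ both become contractible as $B\to\infty$, the homotopy invariance of the collapse construction with respect to isotopies shows that all these choices produce a single well-defined stable homotopy class, and in particular canonically identifies the Thom spectra $M^{\nu(j_1)}$, $M^{\nu(j_2)}$, $M^{\nu((j_1,j_2))}$ occurring in the definition of $\mu$ with $M^{-\tau}$.

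Then I would run through the axioms in turn. Axiom (1b) is immediate, since $(M^0,\epsilon,\delta)$ is the stabilization of the cocommutative counital comonoid $(M,p,d)$ in spaces, and symmetry of $d$ likewise gives $\sigma\circ\delta\approx\delta$. Axiom (1a) follows from the commuting square of diagonal embeddings $M\to M\times M\to M\times M\times M$ for associativity, and from the fact that $(j_1,\id)$ is isotopic to $(0,\id)$ (via a homotopy $j_1\sim 0$), which makes the relevant twisted collapse map homotopic to the identity, for the unit; the module axioms (2a) and comodule axioms (2b) are handled by these same two devices. The most substantive points are (3a) and (3b): each is obtained by specializing the transversality diagram established above to an explicit commuting square of diagonal embeddings into a product of copies of $\R^B$ and $M$, equipped with a compatible bundle isomorphism, and then using a smooth homotopy $j\sim 0$ to build a one-parameter family of bundle maps identifying that isomorphism with the evident one; stabilizing the resulting square yields precisely the diagram demanded by the definition. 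Finally $\mu\circ\sigma\approx\mu$, because $(j_1,j_2)$ and $(j_2,j_1)$ are isotopic embeddings $M\looparrowright\R^B\times\R^B$ for $B$ large. Hence $(M^{-\tau},M^0,\eta,\mu,\psi,\epsilon,\delta,\phi)$ is a commutative Frobenius pair in the homotopy category of spectra.

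The main obstacle is not any single diagram but the uniformity of the auxiliary data: one must arrange the embeddings, isotopies, and null-homotopies used in defining and then in comparing $\eta$, $\mu$, $\psi$ so that the collapse maps appearing in the axiom diagrams are literally those occurring in the definitions, up to a homotopy that is itself canonical. The contractibility of the spaces of embeddings and smooth maps into $\R^\infty$, together with the homotopy invariance of the collapse map, is precisely what makes this legitimate; with that principle in force, each axiom reduces to a commuting square of embeddings of manifolds plus a compatible bundle identification, followed by an appeal to the transversality diagram. By \eqref{prop:axiom-4} the Frobenius pair structure so obtained in particular exhibits $\Sigma^\infty M_+$ and $M^{-\tau}$ as Spanier--Whitehead dual, which recovers Atiyah duality.
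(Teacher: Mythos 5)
Your proposal is correct and follows essentially the same route as the paper: the proposition is a summary statement whose proof is the assembly of the preceding paragraphs constructing $\epsilon,\eta,\delta,\mu,\phi,\psi$ and verifying axioms (1a)--(3b) and commutativity, all resting on the contractibility (as $B\to\infty$) of the spaces of embeddings and smooth maps into $\R^B$ together with homotopy invariance of the Pontryagin--Thom collapse map. Your emphasis on the uniformity of auxiliary choices accurately captures the point the paper highlights under ``Homotopy invariance of the collapse map.''
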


As a consequence of general properties of Frobenius pairs, we recover
Atiyah duality.

\begin{cor}[Atiyah]
There is a weak equivalence between $\Sigma^\infty M_+$ and the
Spanier-Whitehead dual of $M^{-\tau}$.
\end{cor}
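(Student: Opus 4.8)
The plan is to deduce the corollary as a purely formal consequence of the preceding Proposition, using only the general facts about Frobenius pairs established in Section~2. The homotopy category of spectra is a symmetric closed monoidal category: its unit is $\one = S^0$, its product is the smash product, and it carries an internal function object $\uHom({-},{-})$ (the function spectrum) with $\Hom(Z\sm A,B)\approx\Hom(A,\uHom(Z,B))$. By definition, the Spanier--Whitehead dual of a spectrum $Z$ is $\uHom(Z,S^0)$, and a weak equivalence of spectra is the same thing as an isomorphism in this category. So it suffices to produce an isomorphism $\Sigma^\infty M_+\approx\uHom(M^{-\tau},S^0)$ there.

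Next I would invoke the Proposition: the pair $(X,Y)=(M^{-\tau},\Sigma^\infty M_+)$ underlies a commutative, hence left, Frobenius pair in this category. Proposition~\ref{prop:axiom-4} then applies verbatim and furnishes the duality datum $\alpha=\phi\eta\colon\one\ra Y\sm X$ and $\beta=\epsilon\psi\colon X\sm Y\ra\one$ satisfying the two triangle identities; here $\beta\colon M^{-\tau}\sm\Sigma^\infty M_+\ra S^0$ is precisely the pairing assembled from the Pontryagin--Thom collapse and the projection, i.e., the Atiyah duality pairing. In particular $X=M^{-\tau}$ is left dualizable. Applying the Remark following Corollary~\ref{cor:duality-correspondences} with $U=V=\one$, left dualizability of $X$ with dual $Y$ yields $\uHom(X,\one)\approx Y$; unwinding, this isomorphism is the adjoint of $\beta$, which the triangle identities force to be an isomorphism. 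Thus $\Sigma^\infty M_+\approx\uHom(M^{-\tau},S^0)$, i.e., $\Sigma^\infty M_+$ is the Spanier--Whitehead dual of $M^{-\tau}$, with the equivalence realized by the adjoint of the collapse/projection pairing --- the map named in the introduction.

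The step requiring care is bookkeeping rather than mathematics: one must match the abstract duality datum of Proposition~\ref{prop:axiom-4} with the concrete assertion of Atiyah duality (that the adjoint $M_+\ra\uHom(M^{-\tau},S^0)$ of the pairing is a weak equivalence), and record that nothing beyond the closed symmetric monoidal structure on the stable homotopy category is needed --- in particular no point-set lift is required, since Spanier--Whitehead duality lives at the level of the homotopy category. Everything else is immediate from Section~2.
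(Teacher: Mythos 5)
Your proof is correct and follows exactly the route the paper intends: invoke the preceding Proposition to get the Frobenius pair, apply Proposition~\ref{prop:axiom-4} to extract the duality datum $(\alpha,\beta)$ and hence dualizability of $M^{-\tau}$, and then use the Remark after Corollary~\ref{cor:duality-correspondences} with $U=V=\one$ to read off $\uHom(M^{-\tau},S^0)\approx\Sigma^\infty M_+$. The paper leaves this deduction implicit, and your write-up supplies precisely the omitted bookkeeping.
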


\begin{rem}
We can extend the above arguments to deal with duality for manifolds
with boundary.  Thus, if $M$ is a smooth compact manifold with boundary
$\partial M=N$, then can we take
\[
X=(M/N)^{-\tau},\qquad Y=M^0,
\]
and define maps
\begin{align*}
  \epsilon \colon & M^0\ra \one,
&  \eta \colon & \one \ra (M/N)^{-\tau},
\\
  \delta \colon & M^0 \ra M^0\sm M^0,
&  \mu  \colon & (M/N)^{-\tau}\sm (M/N)^{-\tau}\ra (M/N)^{-\tau},
\\
  \phi\colon & (M/N)^{-\tau} \ra M^0\sm (M/N)^{-\tau},
&   \psi\colon & (M/N)^{-\tau}\sm M^0\ra M^0,
\end{align*}
which define a Frobenius pair.
\end{rem}

\begin{bibdiv}
\begin{biblist}
\bib{atiyah-thom-complexes}{article}{
  author={Atiyah, M. F.},
  title={Thom complexes},
  journal={Proc. London Math. Soc. (3)},
  volume={11},
  date={1961},
  pages={291--310},
  issn={0024-6115},
}

\bib{cohen-multiplicative-thom}{article}{
  author={Cohen, Ralph L.},
  title={Multiplicative properties of Atiyah duality},
  journal={Homology Homotopy Appl.},
  volume={6},
  date={2004},
  number={1},
  pages={269--281},
  issn={1532-0081},
}

\bib{kock-frobenius-algebras}{book}{
  author={Kock, Joachim},
  title={Frobenius algebras and 2D topological quantum field theories},
  series={London Mathematical Society Student Texts},
  volume={59},
  publisher={Cambridge University Press},
  place={Cambridge},
  date={2004},
  pages={xiv+240},
  isbn={0-521-83267-5},
  isbn={0-521-54031-3},
}

\bib{milnor-stasheff-characteristic-classes}{book}{
  author={Milnor, John W.},
  author={Stasheff, James D.},
  title={Characteristic classes},
  note={Annals of Mathematics Studies, No. 76},
  publisher={Princeton University Press},
  place={Princeton, N. J.},
  date={1974},
  pages={vii+331},
}

\bib{strickland-kn-local-duality-finite-groups}{article}{
  author={Strickland, N. P.},
  title={$K(N)$-local duality for finite groups and groupoids},
  journal={Topology},
  volume={39},
  date={2000},
  number={4},
  pages={733--772},
  issn={0040-9383},
}

\end{biblist}
\end{bibdiv}

\end{document}